\newtheorem{theorem}{Theorem}[section]
\newtheorem{lemma}[theorem]{Lemma}
\newtheorem{proposition}[theorem]{Proposition}
\theoremstyle{definition}
\newtheorem{definition}[theorem]{Definition}
\newtheorem{remark}[theorem]{Remark}
\newtheorem{example}[theorem]{Example}
\newcommand{\R}{\mathbb{R}}
\newcommand{\ExtZhou}{\boldsymbol{E}} % JW Extension
\newcommand{\ExtZero}{\mathrm{E}_0} % Zero Extension to fattened set
\newcommand{\Ext}{\mathrm{E}} % Extension operator for Wsp with Hardy term
\renewcommand{\L}{\mathrm{L}} 
\newcommand{\W}{\mathrm{W}} % intrinsic space -- closures for BC
\renewcommand{\d}{\,\mathrm{d}}
\newcommand{\B}{\mathrm{B}}
\newcommand{\cl}[1]{\overline{#1}}
\newcommand{\bO}{\boldsymbol{O}}
\DeclareMathOperator{\bd}{\partial \!}
\DeclareMathOperator{\dist}{d}
\DeclareMathOperator{\diam}{diam}
\def\XXint#1#2#3{{\setbox0=\hbox{$#1{#2#3}{%
\int}$ }
\vcenter{\hbox{$#2#3$ }}\kern-.6\wd0}}
\title[Extensions for fractional Sobolev spaces with boundary conditions]{The extension problem for fractional Sobolev spaces with a partial vanishing trace condition}
\author{Sebastian Bechtel}
\address{Fachbereich Mathematik, Technische Universit\"at Darmstadt, Schlossgartenstr. 7, 64289 Darmstadt, Germany}
\email{bechtel@mathematik.tu-darmstadt.de}
\subjclass[2010]{Primary: 46E35. Secondary: 46B70, 26D15.}
\date{\today}
\dedicatory{}
\keywords{(fractional) Sobolev spaces, Kondratiev spaces, measure density condition, extension operators, Hardy's inequality}
\begin{document}
\begin{abstract}

We construct whole-space extensions of functions in a fractional Sobolev space of order $s\in (0,1)$ and integrability $p\in (0,\infty)$ on an open set $O$ which vanish in a suitable sense on a portion $D$ of the boundary $\bd O$ of $O$. The set $O$ is supposed to satisfy the so-called \emph{interior thickness condition in $\bd O \setminus D$}, which is much weaker than the global interior thickness condition. The proof works by means of a reduction to the case $D=\emptyset$ using a geometric construction. 

\end{abstract}
\maketitle
%%%%%%%%%%%%%%%%%%%%%%%%%%%%%%%%%%%%%%%%%%%%%%%%%%%%%%%%%%%%%%%%%%%%%%%%%%%%%%%%%%%%%%%%%%%%%%%%%%%%%%%%%%%%%%%%%%%%%%%%%%%%%%%%%%%%%%%%%%%%%%%%%%%%%%%%%%%%%%%%%%%%
\section{Introduction and main results}
\label{Sec: Introduction}

Let $O\subseteq \R^d$ be open. For $s\in (0,1)$ and $p\in (0,\infty)$ the fractional Sobolev space $\W^{s,p}(O)$ consists of those $f\in \L^p(O)$ for which the seminorm
\begin{align}
	[f]_{\W^{s,p}(O)} \coloneqq \left( \iint_{\substack{x,y \in O \\ |x-y| < 1}} \frac{|f(x)-f(y)|^p}{|x-y|^{sp+d}} \d y \d x \right)^\frac{1}{p}
\end{align}
is finite. Under the interior thickness condition
\begin{align}
\label{ITC}
	\forall x\in O, r\in (0,1] \, : \quad |\B(x,r) \cap O| \gtrsim |\B(x,r)|, \tag{ITC}
\end{align}
whole-space extensions for $\W^{s,p}(O)$ were constructed by Zhou~\cite{Zhou}. Though the mapping is in general not linear, extensions depend boundedly on the data. The case $p\geq 1$ was already treated earlier by Jonsson and Wallin~\cite{JW}, and their extension operator is moreover linear. In fact, Zhou has shown that the interior thickness condition is equivalent for $\W^{s,p}(O)$ to admit whole-space extensions. If we impose a vanishing trace condition on $\bd O$ in a suitable sense, zero extension is possible, so in this case no geometric quality of $O$ is needed. It is now natural to ask what happens if a vanishing trace condition is only imposed on a portion $D\subseteq \bd O$. 

To be more precise, we consider the space $\W^{s,p}_D(O)$ given by $\W^{s,p}(O) \cap \L^p(O, \dist_D^{-sp})$, where $\dist_D$ is the distance function to $D$. The fractional Hardy term in-there models the vanishing trace condition on $D$, compare for~\cite{Dyda-Vahakangas,Hardy-Poincare,ET,Hajlasz-PointwiseHardy,Lehrback-PointwiseHardy}. Spaces of this kind were also recently investigated in~\cite{KondratievProperties} and have a history of successful application in the theory of elliptic regularity, see for example~\cite{Hansen}.

The present paper seeks minimal geometric requirements under which functions in $\W^{s,p}_D(O)$ can be boundedly extended to whole-space functions. We will see in Lemma~\ref{Lem: ITC in boundary suffices} that in~\eqref{ITC} we could equivalently consider balls centered in $\bd O$ instead of $O$. Put $N \coloneqq \bd O \setminus D$. In Definition~\ref{Def: ITC in F} we introduce the \emph{interior thickness condition in $N$}, which requires that~\eqref{ITC} holds for balls centered in $N$. For $D=\emptyset$, this is just the usual interior thickness condition in virtue of the aforementioned Lemma~\ref{Lem: ITC in boundary suffices}. It is the main result of this article to show that the interior thickness condition in $N$ is sufficient for the $\W^{s,p}_D(O)$-extension problem.

A major obstacle is that the interior thickness condition in $N$ does not provide thickness in \emph{any} neighborhood around $N$, which makes localization techniques not applicable. An example for this is a self-touching cusp, see Example~\ref{Ex: geometry for extension operator}. Our construction is as follows. The extension procedure decomposes into a zero extension from $O$ to some suitable superset $\bO$ of $O$, which is an enlargement of $O$ near $D$, followed by an application of Zhou's construction on $\bO$. Hence, suitability of $\bO$ is measured by two properties: First, the zero extension can be bounded in $\W^{s,p}(\bO)$ with the aid of the fractional Hardy term. Second, $\bO$ satisfies~\eqref{ITC}, so that Zhou's result is applicable. A similar construction of $\bO$ was performed by the author together with M.~Egert and R.~Haller-Dintelmann in~\cite{Kato}. The main result then reads as follows.

\begin{theorem}
\label{Thm: Extension operator}
	Let $O\subseteq \R^d$ and let $D\subseteq \bd O$, $p\in (0,\infty)$ and $s\in (0,1)$. If $O$ satisfies the interior thickness condition in $\bd O\setminus D$, then there exists a bounded mapping
	\begin{align}
		\Ext: \W^{s,p}(O) \cap \L^p(O, \dist_D^{-sp}) \to \W^{s,p}_D(\R^d).
	\end{align}
	If $p\geq 1$, then $\Ext$ is moreover linear.
\end{theorem}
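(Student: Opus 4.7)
The plan, following the roadmap in the introduction, is to factor $\Ext$ as a composition $\Ext = \ResJW \circ \ExtZero$, where $\ExtZero$ is a zero-extension to an auxiliary set $\bO \supseteq O$ whose construction enlarges $O$ in a neighbourhood of $D$ only, and $\ResJW$ is the whole-space extension for $\W^{s,p}(\bO)$ from Zhou~\cite{Zhou} (or Jonsson-Wallin~\cite{JW} for linearity when $p\geq 1$). To build $\bO$, I would use a Whitney-type decomposition of $\R^d\setminus N$ along the lines of \cite{Kato}, attaching to $O$ those cubes in $O^c$ that can be reached from $O$ without crossing $N$. The resulting $\bO$ should enjoy three properties: (a) $D\subseteq\Interior(\bO)$, so $\dist(\R^d\setminus\bO,D)\geq c>0$; (b) $\bd\bO\subseteq N$ up to a negligible set; and (c) the separation estimate $\dist(x,\bO\setminus O)\gtrsim \dist_D(x)$ for every $x\in O$, since any path joining $x$ to a point of $\bO\setminus O$ must pass through $\bd O\cap \cl{\bO}\subseteq D$.

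Property (b) lets $\bO$ inherit \eqref{ITC} from the hypothesis: by Lemma~\ref{Lem: ITC in boundary suffices} it is enough to consider balls centred at $x\in\bd\bO\subseteq N$, and for such $x$ the assumption $|\B(x,r)\cap O|\gtrsim |\B(x,r)|$ passes to the superset $\bO\supseteq O$. Hence $\ResJW$ is applicable. To bound $\ExtZero:\W^{s,p}_D(O)\to\W^{s,p}(\bO)$, the $\L^p$-part is trivial, while splitting $[\ExtZero f]_{\W^{s,p}(\bO)}^p$ along $\bO\times\bO=(O\times O)\sqcup(O\times(\bO\setminus O))\sqcup((\bO\setminus O)\times O)\sqcup((\bO\setminus O)^2)$ leaves only the two symmetric cross terms (the $O\times O$ part equals $[f]_{\W^{s,p}(O)}^p$, the last vanishes). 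For the cross terms, by (c), for each fixed $x\in O$ the annular integral
\begin{align}
\int_{\{y\in \bO\setminus O\,:\,|x-y|<1\}}\frac{1}{|x-y|^{sp+d}}\d y\lesssim \dist_D(x)^{-sp},
\end{align}
and integrating $|f(x)|^p$ against this weight recovers exactly the fractional Hardy term.

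With $\Ext := \ResJW\circ\ExtZero\in\W^{s,p}(\R^d)$ in hand, it only remains to verify that $\Ext f\in \L^p(\R^d,\dist_D^{-sp})$. On $\bO$ the function coincides with $\ExtZero f$, which vanishes off $O$, so this contribution is precisely the original Hardy term; on $\R^d\setminus\bO$ property (a) gives $\dist_D\geq c$, so that integral is dominated by $c^{-sp}\|\Ext f\|_{\L^p(\R^d)}^p$. Linearity for $p\geq 1$ is inherited from both factors. I expect the main obstacle to be the geometric step: realising $\bO$ so that (a), (b) and (c) hold simultaneously under the \emph{partial} ITC hypothesis, in particular in the pathological situations where $N$ touches $D$ or forms cusps as in Example~\ref{Ex: geometry for extension operator}, where one has no thickness in any full neighbourhood of $N$ to work with.
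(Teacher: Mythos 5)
Your architecture coincides with the paper's: a Whitney decomposition of the complement of $\cl{N}$, attaching to $O$ the cubes whose closures meet $\cl{O}$, zero extension controlled by the Hardy weight, then Zhou's operator. Your treatment of $\ExtZero$ (splitting $\bO\times\bO$ and bounding the annular integral by $\dist_D(x)^{-sp}$) is exactly Section~\ref{Sec: zero extension}. The gap is in the geometric step, which you correctly flag as the main obstacle but then assume away: properties (a) and (b) of your list are false for this (or any compatible) construction. For (a): the paper's $\bO = O\cup\bigcup_{Q\in\Sigma}(Q\setminus D)$ satisfies $\bd O\subseteq\bd\bO$, so $D$ remains on the boundary and $\dist(\R^d\setminus\bO,D)=0$; more fundamentally, near $\cl{D}\cap\cl{N}$ (as in Example~\ref{Ex: geometry for extension operator}) the added cubes have diameter comparable to their distance to $N$, which tends to zero, so no uniform collar around $D$ can exist. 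Your final verification that $\Ext f\in\L^p(\R^d,\dist_D^{-sp})$ rests entirely on this uniform $c$ and therefore does not go through as written (the paper's own proof, it must be said, is silent on this point and only establishes boundedness into $\W^{s,p}(\R^d)$).

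For (b): $\bd\bO$ is not contained in $N$ even up to a null set; it contains $D$ and faces of added Whitney cubes lying at distance comparable to their diameter from $N$, and these are precisely the points where \eqref{ITC} must be checked by hand. The paper's argument is a case distinction: for $x\in\cl{Q}$ with $Q\in\Sigma$ and $r\geq 4\dist(Q,N)$, the ball $\B(x,r)$ contains a ball of radius $r/2$ centred in $\cl{N}$, so the hypothesis applies; for $r<4\dist(Q,N)\leq 16\diam(Q)$, the cube $Q\subseteq\bO$ itself supplies the measure. Without this, the application of Zhou's theorem is unjustified. Similarly, your one-line justification of (c) (``any path must pass through $D$'') needs the Whitney inequality $\diam(Q)\leq\dist(Q,N)$ to rule out the segment crossing $N$, and that only works when $|x-y|<\diam(Q)$; in the complementary case the segment may cross $N$, and one argues instead via a point of $\cl{Q}\cap D$ and the triangle inequality (Lemma~\ref{Lem: point distance for zero extension}). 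So the proposal has the right skeleton and a correct analytic half, but the geometric half --- which is the actual content of the theorem --- is missing or based on false claims.
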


We will also comment on the sharpness of our result in Section~\ref{Sec: necessity}.

Finally, a remark on the case $p=\infty$ is in order. In this situation, the fractional Sobolev space is substituted by the H\"older space of order $s\in (0,1)$. Then the Whitney extension theorem~\cite[Thm.~3, p.174]{Stein} provides a linear extension operator without any geometric requirements. In particular, the fractional Hardy term is not needed, though it is easily seen that $\| f \dist_D^{-s} \|_\infty$ can only be finite if $f$ vanishes identically on $D$, and the same is of course true for the extension.

%To conclude, we consider the necessity of the geometric assumption from Section~\ref{Sec: Extension Operator Wsp} in Section~\ref{Sec: necessity}. To be more precise, we introduce a condition in Definition~\ref{Def: degenerate ITC} that is strictly weaker than that from Definition~\ref{Def: ITC in F} imposed in Theorem~\ref{Thm: Extension operator}. Proposition~\ref{Prop: degenerate condition necessary} shows that this condition is necessary for extension operators on the space $\W^{s,p}(O)\cap \L^p(O, \dist_D^{-sp})$. Example~\ref{Ex: necessary condition attained} is a geometry in which such an extension operator is available but which is not admissible in Theorem~\ref{Thm: Extension operator}. 

\subsection*{Acknowledgments}

The author thanks his Ph.D.\ advisor Robert Haller-Dintelmann for his support, the \enquote{Studienstiftung des deutschen Volkes} for financial and academic support, Joachim Rehberg for suggesting the topic and Juha Lehrb\"ack for valuable discussions.

\subsection*{(Non-)Standard notation}

We write $\B(x,r)$ for the open ball around $x$ with radius $r$. The closure of a set $A$ is denoted by $\cl{A}$ and the Lebesgue measure of $A$ is denoted by $|A|$. If we integrate with respect to the Lebesgue measure, we write $\d x$, $\d y$ and so on. For diameter and distance induced by the Euclidean metric we write $\diam(\cdot)$ and $\dist(\cdot,\cdot)$. Also, the shorthand notation $\dist_E(x)\coloneqq \dist(\{x\}, E)$ is used. We employ the notation $\lesssim$ and $\gtrsim$ for estimates up to an implicit constant that does not depend on the quantified objects. If two quantities satisfy both $\lesssim$ and $\gtrsim$ we write $\approx$.

\section{Geometry}
\label{Sec: Geometry}

\begin{definition}
\label{Def: ITC in F}
	Let $E\subseteq \R^d$ and $F\subseteq \bd E$. Then $E$ satisfies the \emph{interior thickness condition in $F$} if
	\begin{align}
		\forall x\in F, r\in (0,1] \, : \quad |\B(x,r) \cap E| \gtrsim |\B(x,r)|.
	\end{align}
\end{definition}
The following lemma shows the equivalence between the~\eqref{ITC} condition with balls centered in $O$ and~\eqref{ITC} with balls centered in $\bd O$ already mentioned in the introduction. Though its proof is simple, we include it for good measure.
\begin{lemma}
\label{Lem: ITC in boundary suffices}
	Let $E\subseteq \R^d$. Then $E$ satisfies~\eqref{ITC} if and only if $E$ satisfies the interior thickness condition in $\bd E$.
\end{lemma}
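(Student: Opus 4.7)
The forward direction is almost immediate, while the reverse direction is the one that requires the small geometric argument. I would treat them separately.

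For the \emph{easy} direction (ITC implies ITC in $\bd E$), the plan is to pick $x\in\bd E$ and $r\in(0,1]$. Since $x\in\cl{E}$, there exists $y\in E\cap \B(x,r/2)$. Then $\B(y,r/2)\subseteq \B(x,r)$, and applying \eqref{ITC} at $y$ with radius $r/2\in(0,1]$ gives $|\B(x,r)\cap E|\geq |\B(y,r/2)\cap E|\gtrsim |\B(y,r/2)|\approx |\B(x,r)|$.

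For the converse, the plan is to reduce the estimate at a general point $x\in E$ to the hypothesis at a boundary point that is no farther than $r/2$ from $x$. Fix $x\in E$ and $r\in(0,1]$ and distinguish two cases depending on whether $\B(x,r/2)$ meets $\bd E$. If $\B(x,r/2)\cap \bd E=\emptyset$, I would invoke connectedness of the ball: the sets $\Interior(E)$ and $\R^d\setminus\cl{E}$ are disjoint open sets whose union covers $\B(x,r/2)$, so $\B(x,r/2)$ lies entirely in one of them; as $x\in E\setminus\bd E$ forces $x\in\Interior(E)$, we obtain $\B(x,r/2)\subseteq E$ and hence $|\B(x,r)\cap E|\geq 2^{-d}|\B(x,r)|$. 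Otherwise pick any $y\in \B(x,r/2)\cap \bd E$; then $\B(y,r/2)\subseteq \B(x,r)$ and the ITC in $\bd E$ applied at $y$ yields the desired thickness.

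The only mildly subtle step is the connectedness argument in the second case, but it is standard. No step here looks like a real obstacle; the point of the lemma is mostly book-keeping, so I would keep the write-up short and leave the constants implicit in $\lesssim$.
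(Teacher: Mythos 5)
Your proposal is correct and follows essentially the same route as the paper: the forward direction is identical, and the converse uses the same case split on $\B(x,r/2)$, with your connectedness argument merely making explicit the fact (left implicit in the paper) that $\B(x,r/2)\not\subseteq E$ together with $x\in E$ forces $\B(x,r/2)\cap\bd E\neq\emptyset$.
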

\begin{proof}
	Assume~\eqref{ITC} and let $x\in \bd E$, $r\in (0,1]$. Then pick some $y\in \B(x,r/2)\cap E$ and calculate
	\begin{align}
		|\B(x,r)\cap E| \geq |\B(y,r/2)\cap E| \gtrsim |\B(y,r/2)| \approx |\B(x,r)|. 
	\end{align}
	Conversely, let $x\in E$, $r\in (0,1]$ and assume that $E$ is interior thick in $\bd E$. If $\B(x,r/2) \subseteq E$ then the claim follows immediately. Otherwise, pick again some $y \in \B(x,r/2)\cap \bd E$ and argue as above.
\end{proof}
The following simple example shows that a set can satisfy the thickness condition in some closed subset of the boundary but fails to have it in any neighborhood of it.
\begin{example}
\label{Ex: geometry for extension operator}
	Consider $O=\{ (x,y)\in \R^2\colon |y| < x^2, x<0 \} \cup \{ (x,y)\in \R^2\colon x>0 \}$. This means that $O$ consists of the right half-plane touched by a cusp from the left. Put $D$ to be the boundary of the cusp and $N$ is the $y$-axis except the origin. Then the~\eqref{ITC} estimate holds in $N$ since each ball centered in $N$ hits the half-plane with half of its area, but any proper neighborhood around $N$ would contain a region around the tip of the cusp, in which thickness does not hold (consider a sequence that approximates the tip of the cusp and test with balls that do not reach $N$).
\end{example}

\section{The extension operator}
\label{Sec: Extension Operator Wsp}

In this section we prove Theorem~\ref{Thm: Extension operator}. First, we construct $\bO$ and show that it is interior thick. Second, we show that the zero extension to $\bO$ is bounded using a simple geometric argument. Finally, we patch everything together to conclude. Throughout, $O$ and $D$ are as in Theorem~\ref{Thm: Extension operator} and we put $N\coloneqq \bd O \setminus D$ for convenience.

\subsection{Embedding into an interior thick set}
\label{Sec: fattening}

We construct an open set $\bO \subseteq \R^d$ with $O\subseteq \bO$, $\bd O \subseteq \bd \bO$ and that satisfies~\eqref{ITC}. According to the assumption on $N$ and Lemma~\ref{Lem: ITC in boundary suffices} it suffices to check that $\bO$ is interior thick in $D$ and the \enquote{added} boundary. Of course we could take $\bO$ as $\R^d \setminus \bd O$ in this step but this would make zero extension in Section~\ref{Sec: zero extension} impossible. Therefore, our construction will be in such a way that moreover $|x-y| \gtrsim \dist_D(x)$ whenever $x\in O$ and $y\in \bO \setminus O$, see Lemma~\ref{Lem: point distance for zero extension}, which will do the trick in step two.

Let $\{Q_j\}_j$ be a Whitney decomposition for the complement of $\overline{N}$, which means that the $Q_j$ are disjoint dyadic open cubes such that
\begin{align}
	\mathrm{(i)}\quad \bigcup_j \overline{Q_j} = \R^d \setminus \overline{N}  \qquad \mathrm{(ii)}\quad \diam(Q_j) \leq \dist(Q_j, N) \leq 4 \diam(Q_j).
\end{align}
Using the Whitney decomposition we define
\begin{align}
	\Sigma\coloneqq \{ Q_j\colon \cl{Q_j} \cap \cl{O} \neq \emptyset \} \qquad\text{and}\qquad \bO \coloneqq O \cup \Bigl( \bigcup_{Q\in \Sigma} Q\setminus D \Bigr).
\end{align}
Note that for $Q\in \Sigma$ one has $Q\setminus D = Q\setminus \bd O$. Then all claimed properties of $\bO$ except~\eqref{ITC} follow immediately by definition.
So, let $x\in \bd \bO$ and $r\in (0,1]$. If $x \in \cl{N}$ then we are done by assumption (keep Lemma~\ref{Lem: ITC in boundary suffices} in mind). Otherwise, either $x\in D$ or $x\in \bd Q$ for some $Q\in \Sigma$ (to see this, use that the Whitney decomposition is locally finite). But if $x\in D$ then $x\in \overline{Q}$ for some $Q\in \Sigma$ by property (i) of the Whitney decomposition and the definition of $\Sigma$. Hence, in either case $x\in \overline{Q}$ for some $Q\in \Sigma$. Now we make a case distinction on the radius size compared to the size of $Q$. If $r \geq 4 \dist(Q, N)$, pick $y\in \overline{Q}$ and $z\in \overline{N}$ with $\dist(Q,N) = |y-z|$. Then with (ii) we get
\begin{align}
	|x-z| \leq |x-y| + |y-z| \leq \diam(Q) + \dist(Q,N) \leq 2 \dist(Q,N) \leq r/2,
\end{align}
hence $\B(x,r)$ contains a ball of radius $r/2$ centered in $\overline{N}$ and we are done. Otherwise, if $r<4\dist(Q,N)$, then by (ii) we get $r<16\diam(Q)$ and the claim follows from~\eqref{ITC} for $Q$.

\subsection{Zero extension}
\label{Sec: zero extension}

Let $\bO$ denote the set constructed in the previous step. We define the zero extension Operator $\ExtZero$ from $O$ to $\bO\cup D$ and claim that it is $\W^{s,p}(O) \cap \L^p(O, \dist_D^{-sp}) \to \W^{s,p}(\bO)$ bounded. We start with a preparatory lemma.
\begin{lemma}
\label{Lem: point distance for zero extension}
	One has $|x-y| \gtrsim \dist_D(x)$ whenever $x\in O$ and $y\in \bO\setminus O$.
\end{lemma}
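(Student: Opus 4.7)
My plan is to reduce the claim to producing, for every $y \in \bO \setminus O$, a single point $d_0 \in D$ with $|y - d_0| \lesssim |x-y|$; the triangle inequality $\dist_D(x) \leq |x - d_0| \leq |x-y| + |y-d_0|$ then finishes the job. By definition of $\bO$, such a $y$ lies in some $Q \setminus D$ with $Q \in \Sigma$ and $y \notin O$, so it is natural to look for $d_0$ in $\cl{Q} \cap D$.

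The key auxiliary statement is that $\cl{Q} \cap D$ is non-empty for every $Q \in \Sigma$. First, $\cl{Q} \cap \cl{N} = \emptyset$ because the closed Whitney cubes cover $\R^d \setminus \cl{N}$. Pick $w \in \cl{Q} \cap \cl{O}$, which is non-empty since $Q \in \Sigma$. From $\cl{O} \subseteq O \cup D \cup N$ and $w \notin N$, either $w \in D$ or $w \in O$. In the latter case, $\cl{Q}$ is convex and contains both $w \in O$ and $y \notin O$, so the segment $[w,y]$ crosses $\bd O$ at some point of $\cl{Q}$, which, being disjoint from $N$, must lie in $D$.

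Having a $d_0 \in \cl{Q} \cap D$ in hand, one has $|y - d_0| \leq \diam(Q)$, and it remains to bound $\diam(Q)$ by a constant times $|x-y|$. For this I look at where the segment $[x, y]$ first leaves $O$: since $x \in O$ and $y \notin O$, connectedness together with the openness of $O$ produces some $z \in \bd O$ on the segment with $|y-z| \leq |x-y|$. If $z \in D$, then $\dist_D(x) \leq |x-z| \leq |x-y|$ directly, without needing $d_0$. Otherwise $z \in N$, and since $y \in Q$ the Whitney property $\dist(Q, N) \geq \diam(Q)$ yields $\diam(Q) \leq |y-z| \leq |x-y|$, whence
\begin{align}
\dist_D(x) \leq |x - d_0| \leq |x-y| + \diam(Q) \leq 2|x-y|.
\end{align}
The main non-routine ingredient is the claim $\cl{Q} \cap D \neq \emptyset$: it hinges on the convexity of $\cl{Q}$ together with $\cl{Q}$ avoiding $\cl{N}$ while accommodating both an $O$-point and a non-$O$-point. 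Everything else is a triangle inequality organized by a case split on whether the first exit of $[x,y]$ from $O$ occurs through $D$ or through $N$.
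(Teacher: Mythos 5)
Your argument is correct and is essentially the paper's proof with the case split reorganized: the paper distinguishes $|x-y|<\diam(Q)$ from $|x-y|\geq \diam(Q)$, whereas you distinguish whether the first exit point of $[x,y]$ from $O$ lies in $D$ or in $N$, but both rest on the same two ingredients, namely the Whitney bound $\diam(Q)\leq \dist(Q,N)$ and a point of $\cl{Q}\cap D$. Your convexity argument for $\cl{Q}\cap D\neq\emptyset$ usefully fills in a step the paper only asserts; just note that this claim holds for cubes $Q\in\Sigma$ containing a point of $\bO\setminus O$ (which is what your proof actually uses via $y\notin O$), not literally for every $Q\in\Sigma$, since a Whitney cube lying entirely inside $O$ also belongs to $\Sigma$.
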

\begin{proof}
	We consider $y\in \bO\setminus O$ and pick some $Q\in \Sigma$ that contains $y$. We distinguish whether or not $x$ and $y$ are far away from each other in relation to $\diam(Q)$.
	
	\emph{Case 1}: $|x-y|<\diam(Q)$. Fix a point $z \in \bd O$ on the line segment connecting $x$ with $y$. Assume for the sake of contradiction that $z\in N$. Then using (ii) we calculate
	\begin{align}
		\dist(Q,N) \leq |y-z| \leq |x-y| < \diam(Q) \leq \dist(Q,N),
	\end{align}
	hence we must have $z\in D$. Thus, $|x-y| \geq |x-z| \geq \dist_D(x)$.
	
	\emph{Case 2}: $|x-y| \geq \diam(Q)$. By definition of $\Sigma$ and $y\not\in O$ we can pick $z\in \cl{Q}\cap D$. Then
	\begin{align}
		|x-z| \leq |x-y| + |y-z| \leq |x-y| + \diam(Q) \leq 2 |x-y|,
	\end{align}
	hence $|x-y| \gtrsim \dist_D(x)$.
\end{proof}
This enables us to estimate $\ExtZero$. Clearly, we only have to estimate the $\W^{s,p}(O)$--seminorm since extension by zero is always isometric on $\L^p$. Let $f \in \W^{s,p}(O) \cap \L^p(O,\dist_D^{-sp})$, then
\begin{align}
\begin{split}
\label{Eq: E0 estimate}
	\iint_{\substack{x,y \in \bO \\ |x-y| < 1}} \frac{|\ExtZero f(x)-\ExtZero f(y)|^p}{|x-y|^{sp+d}} \d y \d x &\leq \iint_{\substack{x,y \in O \\ |x-y| < 1}} \frac{|f(x)-f(y)|^p}{|x-y|^{sp+d}} \d x \d y \\ 
	&+ 2 \iint_{\substack{x\in O, y\in (\bO\setminus O) \\ |x-y| < 1}} \frac{|f(x)|^p}{|x-y|^{sp+d}} \d x \d y.
\end{split}
\end{align}
The first term is bounded by $\|f\|_{W^{s,p}(O)}^p$, so it only remains to bound the second term. Using Lemma~\ref{Lem: point distance for zero extension} and calculating in polar coordinates we find
\begin{align}
	\int_{\substack{y \in (\bO\setminus O) \\ |x-y| < 1}} |x-y|^{-sp-d} \d y \lesssim \dist_D(x)^{-sp}.
\end{align}
Plugging this back into~\eqref{Eq: E0 estimate} yields that we can bound the second term therein by the Hardy term $\|f\|_{\L^p(O, \dist_D^{-sp})}^p$.

\subsection{Proof of Theorem~\ref{Thm: Extension operator}}
\label{Sec: Proof of Wsp Ext Theorem}

We combine the results from the previous sections with the extension procedure of Zhou to conclude.
\begin{proof}[Proof of {Theorem~\ref{Thm: Extension operator}}]
	Put $\Ext = \ExtZhou \circ \ExtZero$, where $\ExtZhou$ is the (non-linear) extension operator of Zhou and $\ExtZero$ is the zero extension operator from the previous step. Clearly, $\ExtZero$ is linear, and we have seen in Section~\ref{Sec: zero extension} that it is $\W^{s,p}_D(O) \to \W^{s,p}(\bO)$ bounded. Since $\bO$ satisfies~\eqref{ITC} by Section~\ref{Sec: fattening}, $\ExtZhou$ is well-defined on $\W^{s,p}(\bO)$ and bounded into $\W^{s,p}(\R^d)$ by Zhou's result.
	
	The claim for $p<1$ then follows already by composition. In the case $p\geq 1$, note that $\ExtZhou$ can be constructed to be linear, see also~\cite{JW}.
\end{proof}
\section{On the sharpness of our result}
\label{Sec: necessity}
In this final section we take a look on how close to a characterization our condition is. We will see in Example~\ref{Ex: necessity} that the interior thickness condition in $N$ is not necessary for the extension problem, but that our construction might fail without it. Afterwards, we will introduce a \emph{degenerate interior thickness condition in $N$}, which is necessary for the extension problem, but is not sufficient for our construction.

\begin{example}
\label{Ex: necessity}
	Consider the upper half-plane in $\R^2$. A Whitney decomposition can be constructed from layers of dyadic cubes. Let $O$ be a \enquote{cusp} that is build from those Whitney cubes which intersect the area below the graph of the exponential function, and let $N$ be its lower boundary given by the real line in $\R^2$. It is eminent that $O$ is not interior thick in $N$. Moreover, our construction of $\bO$ just adds another layer of cubes, so $\bO$ is of the same geometric quality. Hence, our construction does not work in this situation. But zero extension to the upper half-plane is still possible, so with $\bO$ chosen as the upper half-plane, we can construct an extension procedure for $\W^{s,p}_D(O)$ in this configuration. This shows that the interior thickness condition in $N$ is not necessary for $\W^{s,p}_D(O)$ to admit whole-space extensions, but is \enquote{necessary} for our construction to work.
\end{example}

We introduce the aforementioned modified version of the interior thickness condition in $N\subseteq \bd O$ that degenerates near $\bd O \setminus N$.
\begin{definition}
\label{Def: degenerate ITC}
	Say that $O$ satisfies the \emph{degenerate interior thickness condition in $N$} if $O\subseteq \R^d$ is open, $N \subseteq \bd O$ and they fulfill
	\begin{align}
		\forall x\in N, r\leq \min(1, \dist_{\bd O \setminus N}(x))\colon  |\B(x,r)\cap O| \gtrsim |\B(x,r)|.
	\end{align}
\end{definition}
In fact, this condition is necessary for the $\W^{s,p}(O)\cap \L^p(O, \dist_D^{-sp})$-extension problem. The technique to show this is due to Zhou~\cite{Zhou}. By the restriction in radii, the test functions used in Zhou's proof belong to $\W^{s,p}_D(O)$, and then his proof applies \emph{verbatim}, hence we omit the details.
\begin{proposition}
\label{Prop: degenerate condition necessary}
	Let $O \subseteq \R^d$ be open, $D\subseteq \bd O$, $p\in (0,\infty)$, $s\in (0,1)$ and put $N\coloneqq \bd O \setminus D$. If $\W^{s,p}_D(O)$ admits whole-space extensions, then $O$ satisfies the degenerate interior thickness condition in $N$.
\end{proposition}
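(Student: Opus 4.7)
The plan is to mimic Zhou's necessity proof for the ordinary~\eqref{ITC} in the extension problem for $\W^{s,p}$, with one additional check ensuring compatibility with the fractional Hardy term.

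Assume a bounded extension operator $\W^{s,p}_D(O) \to \W^{s,p}(\R^d)$ exists and fix $x \in N$ together with $r \leq \min(1, \dist_D(x))$. The target is $|\B(x,r)\cap O| \gtrsim r^d$. Zhou's strategy, at any boundary point $x_0 \in \bd O$ and scale $r$, is to produce a bump-type test function $f_{x_0,r} \in \W^{s,p}(O)$ supported in $\B(x_0, r/2)\cap O$ whose $\W^{s,p}(O)$-norm is of a definite order in $r$, and from which boundedness of the extension on $\R^d$ forces the desired measure-density lower bound. Since $N \subseteq \bd O$, this construction runs exactly as in~\cite{Zhou} for any $x \in N$ and any admissible $r$, with no interaction with $D$ whatsoever.

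The only genuinely new step is to verify that $f_{x,r}$ also lies in $\L^p(O, \dist_D^{-sp})$ with a norm that does not destroy the scale balance Zhou uses. Here the radius constraint $r \leq \dist_D(x)$ is the decisive input: for every $y \in \B(x, r/2)$ the triangle inequality yields
\begin{align}
	\dist_D(y) \geq \dist_D(x) - \tfrac{r}{2} \geq \tfrac{r}{2},
\end{align}
so $\dist_D^{-sp} \lesssim r^{-sp}$ on $\supp f_{x,r}$ and therefore
\begin{align}
	\| f_{x,r} \|_{\L^p(O, \dist_D^{-sp})}^p \lesssim r^{-sp} \| f_{x,r} \|_{\L^p(O)}^p.
\end{align}
A direct scaling computation shows this Hardy contribution is dominated by the $\W^{s,p}(O)$-seminorm of Zhou's $f_{x,r}$, so $f_{x,r} \in \W^{s,p}_D(O)$ with norm of the correct order in $r$.

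With this verification, Zhou's argument transfers \emph{verbatim} and yields $|\B(x,r) \cap O| \gtrsim |\B(x,r)|$, establishing the degenerate interior thickness condition in $N$. The essentially only obstacle is this Hardy compatibility check of Zhou's cutoffs, which is arranged exactly by the radius condition built into Definition~\ref{Def: degenerate ITC}; no further modification of the original argument is required, which is also why the paper chooses to omit the details.
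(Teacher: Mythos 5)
Your proposal is correct and follows exactly the route the paper takes (and deliberately leaves as a sketch): reuse Zhou's necessity argument, observing that the radius restriction $r \leq \min(1,\dist_D(x))$ forces $\dist_D \gtrsim r$ on the support of Zhou's test functions, so they lie in $\W^{s,p}_D(O)$ with the correctly scaled norm and the argument goes through \emph{verbatim}. Your explicit triangle-inequality check of the Hardy term is precisely the detail the paper omits.
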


\begin{remark}
\label{Rem: degenerate ITC not sufficient}
	In Example~\ref{Ex: necessity} we have seen a configuration which admits whole-space extension for $\W^{s,p}_D(O)$-functions, so in this situation, $O$ satisfies the degenerate interior thickness condition near $N$ by Proposition~\ref{Prop: degenerate condition necessary} (of course, this can also be seen directly). On the other hand, we have seen in that example that in this configuration our construction does not work. Hence, the degenerate interior thickness condition in $N$ is too weak for our proof of Theorem~\ref{Thm: Extension operator}.
\end{remark}

%%%%%%%%%%%%%%%%%%%%%%%%%%%%%%%%%%%%%%%%%%%%%%%%%%%%%%%%%%%%%%%%%%%%%%%%%%%%%%%%%%%%%%%%%%%%%%%%%%%%%%%%%%%%%%%%%%%%%%%%%%%%%%%%%%%%%%%%%%%%%%%%%%%%%%%%%%%%%%%%%%%%

\end{document}